\theoremstyle{plain}
\newtheorem{theorem}{Theorem}[section]
\newtheorem{corollary}[theorem]{Corollary}
\theoremstyle{definition}
\newtheorem{definition}[theorem]{Definition}
\newtheorem{example}[theorem]{Example}
\theoremstyle{remark}
\newtheorem{remark}[theorem]{Remark}
\DeclareMathOperator{\RQ}{RQ}
\DeclareMathOperator{\vol}{vol}
\DeclareMathOperator{\id}{Id}
\DeclareMathOperator{\diag}{diag}
\begin{document}

\title{A Cheeger Cut for Uniform Hypergraphs}

\author[1,2]{Raffaella Mulas\footnote{Email address: r.mulas@soton.ac.uk\\
This work was supported by The Alan Turing Institute under the EPSRC grant EP/N510129/1.}}
\affil[1]{The Alan Turing Institute, London, UK}
\affil[2]{University of Southampton, Southampton, UK}

\date{}
	
\maketitle

\begin{abstract}
The graph Cheeger constant and Cheeger inequalities are generalized to the case of hypergraphs whose edges have the same cardinality. In particular, it is shown that the second largest eigenvalue of the generalized normalized Laplacian is bounded both above and below by the generalized Cheeger constant, and the corresponding eigenfunctions can be used to approximate the Cheeger cut.\newline
\vspace{0.2cm}

\noindent {\bf Keywords:} Hypergraphs, Normalized Laplacian, Cheeger inequalities, Spectral clustering
\end{abstract}

\section{Introduction}
\begin{figure}[t]
    \centering
    \includegraphics[width=7cm]{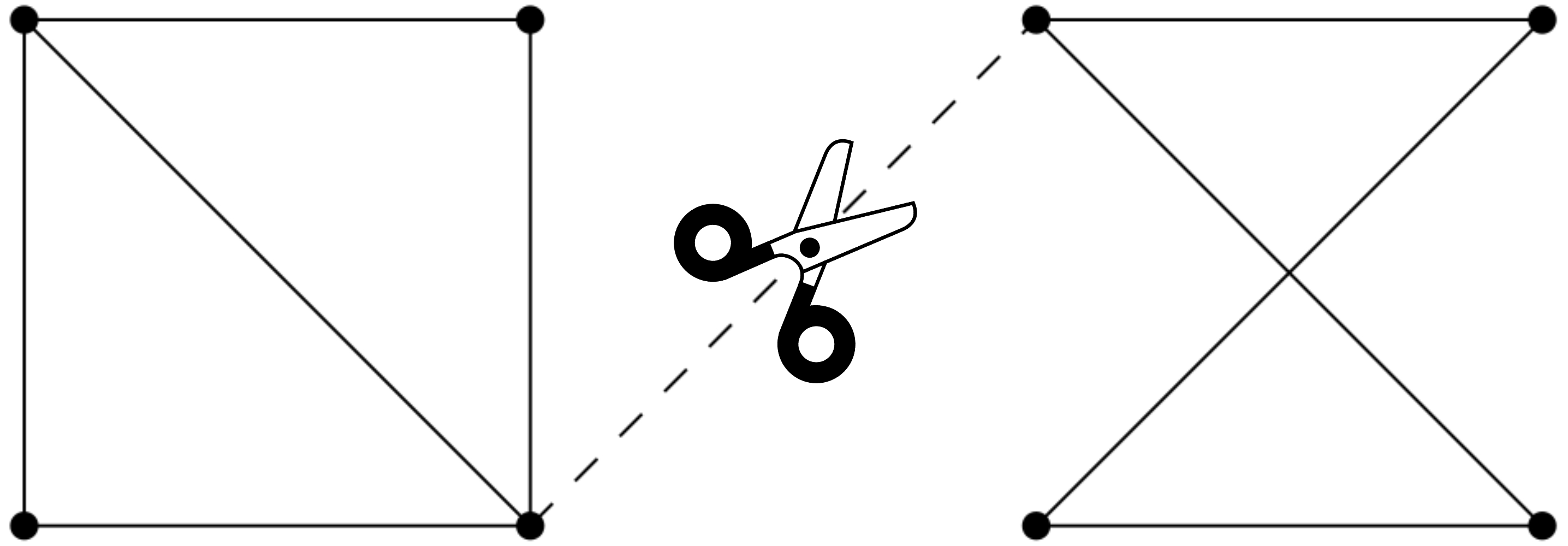}
    \caption{The Cheeger cut on a graph.}
    \label{fig:graph}
\end{figure}
\textbf{Historical note. }Cheeger constants and Cheeger inequalities have a long history. The now-called \textsl{Cheeger constant} of a simple graph $G=(V,E)$ was introduced in 1951 by George Pólya and Gábor Szegő \cite{Polya}, who called it the \textsl{isoperimetric constant} and defined it as
\begin{equation*}
    h(G):=\min_{\emptyset\neq S\subsetneq V}\frac{|E(S,\bar{S})|}{\min\{\vol(S),\vol(\bar{S})\}},
\end{equation*}where $E(S,\bar{S})$ denotes the set of edges between $S$ and its complement $\bar{S}:=V\setminus S$, while the \textsl{volume} of $S$, denoted $\vol(S)$, is the sum of the vertex degrees in $S$. Finding a set $S$ realizing the Cheeger constant means finding a small \textsl{edge cut} $E(S,\bar{S})$ such that, if removed from $G$, it divides the graph into two disconnected components that have roughly equal volume (Figure \ref{fig:graph}). Therefore, $h$ measures how different $G$ is from a disconnected graph, and it is largest for the complete graph.\newline
The continuous analogue of $h(G)$ was then defined by Jeff Cheeger \cite{CheegerPhD} in 1970, in the context of spectral geometry, as follows. Given a compact $n$-dimensional manifold $M$, let
\begin{equation*}
    h(M):=\inf_D\frac{\vol_{n-1}(\delta D)}{\vol_n(D)},
\end{equation*}where $D\subset M$ is a smooth $n$-submanifold with boundary $\delta D$ and $0<\vol_n(D)\leq\vol(M)/2$. Cheeger proved that the first nonvanishing eigenvalue $\lambda_{\min}(M)$ of the Laplace-Beltrami operator is such that
\begin{equation*}
    \lambda_{\min}(M)\geq \frac{1}{4}h^2(M)
\end{equation*}and, as shown by Peter Buser \cite{Buser1} in 1978, for each compact manifold there exist Riemannian metrics for which the inequality becomes sharp. In a later work in 1982, Buser \cite{Buser2} also proved that, if the Ricci curvature of a compact unbordered Riemannian $n$-manifold $M$ is bounded below by $-(n-1)a^2$, for some $a\geq 0$, then
\begin{equation*}
    \lambda_{\min}(M)\leq 2a(n-1)h+10h^2.
\end{equation*}Therefore, $h(M)$ can be used to estimate $\lambda_{\min}(M)$ and vice versa.\newline
In 1984-5, Jozef Dodziuk \cite{dodziuk} and Noga Alon with Vitali Milman \cite{Alon} derived analogous estimates for the graph Cheeger constant and for the first nonvanishing eigenvalue of the Kirchhoff Laplacian associated to a connected graph. Similarly, in 1992, Fan Chung \cite{Chung} proved the Cheeger inequalities for the \textsl{symmetric normalized Laplacian} of a graph $G$ on $n$ nodes, that she defined as
\begin{equation*}
   \mathcal{L}(G):=\id-D(G)^{-1/2}A(G)D(G)^{-1/2},
\end{equation*}where $\id$ is the $n\times n$ identity matrix, $D(G)$ is the diagonal degree matrix and $A(G)$ is the adjacency matrix of $G$. Chung proved that $\mathcal{L}(G)$ has $n$ real, nonnegative eigenvalues, denoted $\lambda_1(G)\leq\ldots\leq \lambda_n(G)$, that encode many qualitative properties of $G$. In particular, she proved that, for a connected graph, the first two eigenvalues are such that $\lambda_1(G)=0$ and
\begin{equation}\label{eq:Cheeger1}
    \frac{1}{2}h(G)^2\leq \lambda_2(G)\leq 2h(G).
\end{equation}Therefore, as well as in the continuous case, $h(G)$ can be used to estimate $\lambda_2(G)$ and vice versa. Moreover, the eigenvectors corresponding to $\lambda_2(G)$ can be used in order to approximate the Cheeger cut, as follows. An eigenvector for $\mathcal{L}(G)$ can be seen as a function $f:V\rightarrow \mathbb{R}$ and, if $f$ is an eigenfunction with eigenvalue $\lambda_2(G)$, then $f$ must achieve both positive and negative values, and the edges between the sets
\begin{equation*}
    \{v\in V:f(v)\geq 0\} \quad \text{and} \quad \{v\in V:f(v)<0\}
\end{equation*}approximate the Cheeger cut. Since solving the Cheeger cut problem is NP-hard \cite{np}, while the eigenvalues and the eigenvectors of $\mathcal{L}(G)$ can be found quickly, spectral clustering based on these results is a very common tool and have found many applications, see for instance \cite{appl2007,appl2008,appl2012,appl2015}. Citing \cite{Clustering} (Ulrike von Luxburg, 2007), ``\textsl{In  recent  years,  spectral  clustering  has  become  one  of  the  most  popular  modern  clustering algorithms.  It is simple to implement, can be solved efficiently by standard linear algebra software, and very often outperforms traditional clustering algorithms such as the k-means algorithm}''.\newline
Note that the \textsl{normalized Laplacian} or \textsl{random walk Laplacian}
\begin{equation*}
    L(G):=\id-D(G)^{-1}A(G)=D(G)^{-1/2}\mathcal{L}(G)D(G)^{1/2}
\end{equation*}is similar to $\mathcal{L}(G)$, therefore these two matrices have the same spectrum. Moreover, $f$ is an eigenfunction for $\mathcal{L}(G)$ with eigenvalue $\lambda$ if and only if $D^{1/2}f$ is an eigenfunction for $L(G)$ with eigenvalue $\lambda$. Hence, the above statements for $\mathcal{L}(G)$ can be equivalently stated for $L(G)$, on which we will focus throughout this paper. \newline

\textbf{Aim of this work.} The aim of this work is to generalize the graph Cheeger inequalities and Cheeger cut to the case of \textsl{uniform hypergraphs}. Hypergraphs are a generalization of graphs in which vertices are joined by sets of any cardinality, and a hypergraph is said to be $k$-\textsl{uniform} if all its edges have cardinality $k$. Hypergraphs find applications in many real networks (e.g.\ cellular networks~\cite{KlamtHausTheis}, social networks,~\cite{ZhangLiu}, neural networks~\cite{neuro1}, opinion formation~\cite{LanchierNeufer}, epidemic networks~\cite{BodoKatonaSimon}) and a hypergraph Cheeger cut could be applied to clustering problems on such networks.\newline
The fundamental idea used here is the following. Given a connected simple graph $G$, its \textsl{signless normalized Laplacian} is
\begin{equation*}
    L^+(G):=\id+D(G)^{-1}A(G)=2\id -L(G).
\end{equation*}It is such that
\begin{equation*}
    \lambda \text{ is an eigenvalue for }L(G) \iff 2-\lambda \text{ is an eigenvalue for }L^+(G)
\end{equation*}with the same eigenfunctions and, moreover, $L^+(G)=L(G^+)$, where $G^+$ is the \textsl{signed graph} obtained from $G$ by letting each edge have a positive sign. Since, furthermore, $h(G)=h(G^+)$, the Cheeger inequalities in \eqref{eq:Cheeger1} can be equivalently reformulated in terms of the second largest eigenvalue of $L(G^+)$, as
\begin{equation}\label{eq:Cheeger2}
    \frac{1}{2}h(G^+)^2\leq 2-\lambda_{n-1}(G^+)\leq 2h(G^+).
\end{equation}Also, the Cheeger cut can be approximated based on the sign of a given eigenfunction of $\lambda_{n-1}(G^+)$. We shall use this equivalent formulation of the Cheeger inequalities in order to prove a generalization for uniform hypergraphs.\newline
In particular, given a connected, $k$-uniform hypergraph $\Gamma$, we will see it as an \textsl{oriented hypergraph} \cite{ReffRusnak} with only positive signs and we will consider the corresponding hypergraph normalized Laplacian $L(\Gamma)$ defined in \cite{Hypergraphs}. We will define a generalized Cheeger constant $h(\Gamma)$ for $\Gamma$ that coincides with the classical one in the particular case of graphs and we will prove, in Theorem \ref{thm:main} below, that
\begin{equation*}
    \frac{1}{2(k-1)} h(\Gamma)^2\leq k-\lambda_{n-1}(\Gamma)\leq 2(k-1)h(\Gamma).
\end{equation*}Clearly, the above inequalities generalize \eqref{eq:Cheeger2}, therefore \eqref{eq:Cheeger1}. Moreover, the proof will suggest that the eigenfunctions of $\lambda_{n-1}(\Gamma)$ can be used to approximate the Cheeger cut. \newline

\textbf{Related work.} It is worth mentioning some related work that is present in literature. In \cite{MulasZhang}, some Cheeger-like inequalities are shown for the smallest nonzero eigenvalue of $L(\Gamma)$, for restricted classes of hypergraphs which satisfy either only a generalized Cheeger upper bound or only a generalized Cheeger lower bound. In \cite{related2018,related2018_1,related3,related2020}, Cheeger-type inequalities are shown for other operators on hypergraphs.\newline

\textbf{Structure of the paper.} In Section \ref{Section:def} we give the preliminary definitions and in Section \ref{Section:main} we present the main results. In Section \ref{Section:Upper} we prove the upper Cheeger inequality and in Section \ref{Section:Lower} we prove the lower bound.

\section{Preliminary definitions}\label{Section:def}

\begin{definition}[\cite{ReffRusnak}]
	An \textsl{oriented hypergraph} is a triple $\Gamma=(V,E,\psi_\Gamma)$ such that $V$ is a finite set of vertices, $E$ is a finite multiset of elements $e\in \mathcal{P}(V)\setminus\{\emptyset\}$ called \textsl{edges}, while $\psi_\Gamma:(V,E)\rightarrow \{-1,0,+1\}$ is the \textsl{incidence function} and it is such that 
	\begin{equation*}
	    \psi_\Gamma(v,e)\neq 0 \iff v\in e.
	\end{equation*}
	Two vertices $i\neq j$ are \textsl{co-oriented in $e$} if $\psi_\Gamma(v,e)=\psi_\Gamma(w,e)\neq 0$ and they are \textsl{anti-oriented in $e$} if $\psi_\Gamma(v,e)=-\psi_\Gamma(w,e)\neq 0$.
\end{definition}

We fix, from here on, an oriented hypergraph $\Gamma=(V,E,\psi_\Gamma)$ on $n$ vertices $v_1,\ldots,v_n$.

\begin{definition}
The \textsl{degree} of a vertex $v$, denoted $\deg(v)$, is the number of edges containing $v$. The \textsl{cardinality} of a edge $e$, denoted $|e|$, is the number of vertices that are contained in $e$. $\Gamma$ is \textsl{$d$-regular} if $\deg(v)=d$ is constant for all $v\in V$; it is \textsl{$k$-uniform} if $|e|=k$ is constant for all $e\in E$. 
\end{definition}

\begin{remark}
Signed graphs can be seen as $2$-uniform oriented hypergraphs such that $E$ is a set. Simple graphs can be seen as signed graphs such that, for each $e\in E$, there exists a unique $v\in V$ with $\psi_\Gamma(v,e)=1$ and there exists a unique $w\in V$ with $\psi_\Gamma(w,e)=-1$. Classical hypergraphs (the ones we are going to consider) can be seen as oriented hypergraphs such that
	\begin{equation*}
	    \psi_\Gamma(v,e)= 1 \iff v\in e.
	\end{equation*}
\end{remark}

\begin{definition}
$\Gamma$ is \textsl{connected} if, for every pair of vertices $v,w\in V$, there exists a path that connects $v$ and $w$, i.e., there exist $w_1,\dots,w_m\in V$ and $e_1,\dots,e_{m-1}\in E$ such that:
\begin{itemize}
\item $w_1=v$;
\item $w_m=w$;
\item $\{w_i,w_{i+1}\}\subseteq e_i$ for each $i=1,\dots,m-1$.
\end{itemize}
\end{definition}

For simplicity, we shall assume that $\Gamma$ is connected and has no vertices of degree zero. These assumptions are not restrictive, since the spectrum of a hypergraph is given by the union of the spectra of its connected components \cite{Sharp}, while each vertex of degree zero simply produces $0$ as eigenvalue \cite{Chung}. We also assume that, for all $v\in V$,
\begin{equation}\label{eq:assumption}
    \deg(v)\leq \sum_{w\neq v}\deg(w).
\end{equation} This is always true in the case of graphs and we will need this assumption in the proof of the main theorem.

\begin{definition}[\cite{Hypergraphs}]
The \textsl{degree matrix} of $\Gamma$ is the $n\times n$ diagonal matrix
\begin{equation*}
    D=D(\Gamma):=\diag\bigl(\deg(v_1),\ldots,\deg(v_n)\bigr).
\end{equation*}
The \textsl{adjacency matrix} of $\Gamma$ is the $n\times n$ matrix $A=A(\Gamma):=(A_{ij})_{ij},$ where $A_{ii}:=0$ for each $i=1,\ldots,n$ and, for $i\neq j$,
\begin{align*}
        A_{ij}:=& \biggl|\{\text{edges in which }v_i \text{ and }v_j\text{ are anti-oriented}\}\biggr|+\\
        &-\biggl|\{\text{edges in which }v_i \text{ and }v_j\text{ are co-oriented}\}\biggr|.
\end{align*}The \textsl{normalized Laplacian} of $\Gamma$ is the $n\times n$ matrix 
\begin{equation*}
    L=L(\Gamma):=\id-D^{-1}A.
\end{equation*}
\end{definition}
\begin{remark}
If $\Gamma$ is a simple graph, the adjacency matrix has $(0,1)$-entries while, if $\Gamma$ is a classical hypergraph (seen as an oriented hypergraph such that the incidence function has values in $\{0,+1\}$), then the adjacency matrix  has nonpositive entries.
\end{remark}
From here on we shall assume that $\Gamma$ is a $k$-uniform, classical hypergraph, seen as an oriented hypergraph such that the incidence function has values in $\{0,+1\}$.\newline
As shown in \cite{Hypergraphs}, $L$ has $n$ real, nonnegative eigenvalues, counted with multiplicity. We denote them as
\begin{equation*}
		\lambda_1\leq\ldots\leq \lambda_n.
\end{equation*}Moreover, as shown in \cite{Sharp}, since $\Gamma$ is connected and $k$-uniform, $\lambda_n=k$ and the constant functions are the corresponding eigenfunctions. By the Courant-Fischer-Weyl min-max principle (cf.\ \cite{Hypergraphs}), the second largest eigenvalue of $L$ can be characterized in terms of the \textsl{Rayleigh quotient} of a nonzero function $f:V\rightarrow \mathbb{R}$,
\begin{equation*}
    \RQ(f):=\frac{\sum_{e\in E}\left(\sum_{v\in e}f(v)\right)^2}{\sum_{v\in V}\deg(v)f(v)^2}.
\end{equation*}In particular,
\begin{equation}\label{eq:n-1}
\lambda_{n-1}=\max_{f\perp \mathbf{1}}\RQ(f),
\end{equation}where the condition $f\perp \mathbf{1}$ denotes the \textsl{orthogonality to the constants},
\begin{equation*}
    \sum_{v\in V}\deg(v)f(v)=0,
\end{equation*}derived from the fact that the eigenfunctions corresponding to $\lambda_n$ are the constant functions (cf.\ \cite{Hypergraphs}).\newline
We now introduce the generalized Cheeger constant that will be used for bounding $k-\lambda_{n-1}$.

\begin{definition}Given $S\subseteq V$, we let $\bar{S}:=V\setminus S$, $\vol(S):=\sum_{v\in S}\deg(v)$ and
\begin{equation*}
    E_r(S):=\{e\in E: |e\cap S|=r\},
\end{equation*}for $r\in \{1,\ldots,k\}$.
\end{definition}
\begin{remark}
Clearly, for each $r\in \{1,\ldots,r\}$, $E_r(S)=E_{k-r}(\bar{S})$. Moreover,
\begin{equation*}
    E_k(S)=\{e\in E:e\subseteq S\},
\end{equation*}
\begin{equation*}
    E_0(S)=\{e\in E:e\subseteq \bar{S}\}
\end{equation*}and 
\begin{equation*}
    \vol(S)=\sum_{v\in S}\deg(v)=\sum_{r=1}^{k}r |E_r(S)|.
\end{equation*}
\end{remark}

\begin{definition}
Given $\emptyset\neq S\subsetneq V$,
\begin{equation*}
    h(S):=\frac{\sum_{r=1}^{k-1}|E_r(S)|r(k-r)}{\min\{\vol(S),\vol(\bar{S})\}}.
\end{equation*}The \textsl{Cheeger constant} of $\Gamma$ is
\begin{equation*}
    h:=\min_{\emptyset\neq S\subsetneq V}h(S).
\end{equation*}
\end{definition}
 \begin{remark}Observe that the quantity
 \begin{equation*}
     \sum_{r=1}^{k-1}|E_r(S)|r(k-r)
 \end{equation*}appearing in the numerator of $h(S)$ counts the number of pairwise connections between $S$ and $\bar{S}$. Furthermore, if $\Gamma$ is a graph, then $k=2$, $E_1(S)$ is the set of edges between $S$ and $\bar{S}$, and the Cheeger constant defined above coincides with the one introduced by Pólya and Szegő.
\end{remark}

\section{Main Results}\label{Section:main}
\subsection{Cheeger inequalities}
Our main result is the following theorem.
\begin{theorem}\label{thm:main}Let $\Gamma$ be a connected, $k$-uniform hypergraph. Then,
\begin{equation*}
    \frac{1}{2(k-1)} h^2\leq k-\lambda_{n-1}\leq 2(k-1)h.
\end{equation*}\end{theorem}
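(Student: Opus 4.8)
The plan is to obtain both inequalities at once by reducing to the classical graph Cheeger inequalities \eqref{eq:Cheeger1}, applied to the weighted \emph{clique expansion} of $\Gamma$. The key observation is a reformulation of the top spectral gap: for a nonzero $f:V\to\mathbb{R}$, expanding $\bigl(\sum_{v\in e}f(v)\bigr)^2$ over each edge $e$ and using $k$-uniformity (each vertex of $e$ lies in exactly $k-1$ of the unordered pairs contained in $e$, and $\sum_{v}\deg(v)f(v)^2=\sum_{e}\sum_{v\in e}f(v)^2$) gives the edgewise identity $\sum_{\{v,w\}\subseteq e}(f(v)-f(w))^2=k\sum_{v\in e}f(v)^2-\bigl(\sum_{v\in e}f(v)\bigr)^2$; summing over edges and dividing by $\sum_v\deg(v)f(v)^2$ then yields
\[
k-\RQ(f)=\frac{\sum_{e\in E}\sum_{\{v,w\}\subseteq e}(f(v)-f(w))^2}{\sum_{v\in V}\deg(v)f(v)^2}.
\]

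Let $G$ be the weighted graph on $V$ whose weight on a pair $\{v,w\}$ is the number of edges of $\Gamma$ containing both $v$ and $w$. Each edge of $\Gamma$ through $v$ contributes $k-1$ to the weighted $G$-degree, so $\deg_G(v)=(k-1)\deg(v)$ for all $v$, and therefore the right-hand side of the last display is exactly $(k-1)$ times the Rayleigh quotient of the normalized Laplacian $L(G)$ at $f$, while the constraint $f\perp\mathbf{1}$ is literally the same for $\Gamma$ and for $G$. Minimizing over $f\perp\mathbf{1}$ and using \eqref{eq:n-1} gives $k-\lambda_{n-1}=(k-1)\mu_2(G)$, where $\mu_2(G)$ is the second smallest eigenvalue of $L(G)$; equivalently $L(G)=\tfrac{1}{k-1}(k\,\id-L)$, with the same eigenfunctions. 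Moreover $G$ is connected (a path in $\Gamma$ is a path in $G$) and has no isolated vertex; an edge $e$ with $|e\cap S|=r$ contributes $r(k-r)$ crossing pairs of the clique on $e$, so the $G$-boundary of $S$ equals $\sum_{r=1}^{k-1}|E_r(S)|r(k-r)$ and $\vol_G(S)=(k-1)\vol(S)$, whence $h(G)=h/(k-1)$. Substituting $\mu_2(G)=(k-\lambda_{n-1})/(k-1)$ and $h(G)=h/(k-1)$ into the classical inequalities $\tfrac12 h(G)^2\le\mu_2(G)\le 2h(G)$ and multiplying by $k-1$ gives
\[
\frac{1}{2(k-1)}h^2\ \le\ k-\lambda_{n-1}\ \le\ 2h,
\]
which implies the stated bound, since $2h\le 2(k-1)h$ for $k\ge2$. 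As the eigenfunctions of $\lambda_{n-1}$ are precisely those of $\mu_2(G)$, sign-thresholding such an eigenfunction — the standard way of approximating the Cheeger cut of $G$ — approximates the Cheeger cut of $\Gamma$.

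The step requiring care is the bookkeeping of the factor $k-1$ (it enters through $\deg_G$ versus $\deg$, through $k-\RQ$ versus the $G$-Rayleigh quotient, and through $h(G)$ versus $h$) together with the edgewise identity, which is where $k$-uniformity is genuinely used; one also uses that \eqref{eq:Cheeger1} holds verbatim for connected weighted graphs, which is all Chung's proof needs. With this route the hypothesis \eqref{eq:assumption} is not required; indeed it holds automatically for $k\ge2$, because $\deg(v)\le|E|$ while $\sum_{w\neq v}\deg(w)=k|E|-\deg(v)\ge(k-1)|E|$. Alternatively — and this matches the layout of Sections \ref{Section:Upper}--\ref{Section:Lower} — one may prove the two bounds separately and directly: the upper bound by evaluating $k-\RQ(\cdot)$ on the $\mathbf{1}$-orthogonalized indicator $\mathbf{1}_S-\vol(S)/\vol(V)$ of an optimal set $S$ (a short computation using $\sum_v\deg(v)(\mathbf{1}_S(v)-\vol(S)/\vol(V))^2=\vol(S)\vol(\bar S)/\vol(V)$ and, after swapping $S$ with $\bar S$ if needed, $\vol(V)\le2\vol(\bar S)$, then gives $k-\RQ\le 2h$); and the lower bound by a sweep/threshold argument on an eigenfunction of $\lambda_{n-1}$ combined with Cauchy--Schwarz, with \eqref{eq:assumption} ensuring the sweep set has volume at most $\vol(V)/2$.
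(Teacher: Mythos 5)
Your proposal is correct, but it follows a genuinely different route from the paper. You reduce everything to the weighted clique expansion $G$ (weight on $\{v,w\}$ equal to $|E(v,w)|$), using the exact identities $\deg_G=(k-1)\deg$, $L(G)=\tfrac{1}{k-1}(k\,\id-L)$ (so $k-\lambda_{n-1}=(k-1)\mu_2(G)$ with the same eigenfunctions), and $h(G)=h/(k-1)$, and then invoke the classical Cheeger inequalities for connected weighted graphs; the edgewise identity $\sum_{\{v,w\}\subseteq e}(f(v)-f(w))^2=k\sum_{v\in e}f(v)^2-\bigl(\sum_{v\in e}f(v)\bigr)^2$ and the cut/volume bookkeeping are all correct. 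The paper instead proves the two bounds directly on the hypergraph: the upper bound by evaluating $\RQ$ on the two-valued test function $1$ on $S$, $-\alpha$ on $\bar S$ for an optimal $S$, and the lower bound by a sweep-cut argument on an eigenfunction of $\lambda_{n-1}$ (thresholding $f+f(v_t)$, splitting into $f_\pm$, Cauchy--Schwarz), generalizing Chung's proofs to the uniform hypergraph setting. What your approach buys: it is much shorter, it makes the spectral-clustering statement immediate (the eigenfunctions of $\lambda_{n-1}$ are exactly those of $\mu_2(G)$), it shows that hypothesis \eqref{eq:assumption} is automatic for $k$-uniform hypergraphs with $k\ge 2$, and it even yields the sharper upper bound $k-\lambda_{n-1}\le 2h$, which the paper's direct argument weakens to $2(k-1)h$ (the paper bounds $r\le r(k-r)$ in the last step). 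The price is the reliance on the Cheeger inequalities \eqref{eq:Cheeger1} for \emph{weighted} graphs: the cited reference states them for simple graphs, so strictly speaking you should either cite a weighted version (it is standard, e.g.\ in Chung's book, and her proofs carry over verbatim) or note that the proofs adapt without change; the paper's self-contained hypergraph arguments avoid this dependency.
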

\begin{remark}
Theorem \ref{thm:main} generalizes \eqref{eq:Cheeger2} which is, on its turn, equivalent to the classical Cheeger inequalities in \eqref{eq:Cheeger1}.
\end{remark}
We split the proof of Theorem \ref{thm:main} into two parts: in Section \ref{Section:Upper} we prove the upper bound and in Section \ref{Section:Lower} we prove the lower bound. Our proofs are inspired by the graph case and in particular by the proof method in \cite[Lemma 2.1]{Chung} for the upper bound; by the proof method in \cite[Theorem 2.2]{Chung} for the lower bound. However, the proofs presented here for hypergraphs are much longer and more complicated than those for graphs. Both proofs make use of the fact that the eigenfunctions corresponding to $\lambda_{n-1}$ are orthogonal to the constants and, as we already observed, this is a consequence of the fact that $\Gamma$ is uniform.
\subsection{Cheeger cut}
The proof of Theorem \ref{thm:main} will also suggest that, as in the graph case, the Cheeger cut of $\Gamma$ can be approximated by the sets
\begin{equation*}
    \{v\in V:f(v)\geq 0\} \quad \text{and} \quad \{v\in V:f(v)<0\},
\end{equation*}for a given eigenfunction $f:V\rightarrow \mathbb{R}$ of $\lambda_{n-1}$. This gives a generalized method of spectral clustering for uniform hypergraphs.

\begin{example}\label{ex:hyp}
Let $\Gamma$ be the hypergraph in Figure \ref{fig:hyp}, with vertex set $V=\{v_1,\ldots,v_6\}$ and edge set $E=\{e_1,e_2,e_3\}$ such that:
\begin{itemize}
    \item $e_1=\{v_1,v_2,v_3\}$;
    \item $e_2=\{v_3,v_4,v_5\}$;
    \item $e_3=\{v_4,v_5,v_6\}$.
\end{itemize}Then, $ D=\textrm{diag}(1,1,2,2,2,1)$,
\begin{equation*}
   A=-
\begin{pmatrix}
  \begin{matrix}
 0 & 1 & 1 & 0& 0 & 0 \\
 1 & 0 & 1 & 0& 0 & 0 \\
 1 & 1 & 0 & 1& 1 & 0 \\
 0 & 0 & 1 & 0& 2 & 1 \\
 0 & 0 & 1 & 2 & 0 & 1 \\
 0 & 0 & 0 & 1& 1 & 0 
  \end{matrix}
\end{pmatrix}
\end{equation*}and
\begin{equation*}
  L=\id-D^{-1}A=
\begin{pmatrix}
   \begin{matrix}
 1 & 1 & 1 & 0& 0 & 0 \\
 1 & 1 & 1 & 0& 0 & 0 \\
0.5 &0.5 & 1 &0.5&0.5 & 0 \\
 0 & 0 &0.5 & 1& 1 &0.5 \\
 0 & 0 &0.5 & 1 & 1 &0.5 \\
 0 & 0 & 0 & 1& 1 & 1
  \end{matrix}
\end{pmatrix}.
\end{equation*}One can check that $\lambda_{n-1}=\frac{3+\sqrt{3}}{2}$ and a corresponding eigenfunction is
\begin{equation*}
  f= \biggl(-\frac{1+\sqrt{3}}{2},-\frac{1+\sqrt{3}}{2},-\frac{1}{2},\frac{1+\sqrt{3}}{4},\frac{1+\sqrt{3}}{4},1\biggr).
\end{equation*}Using $f$ for approximating the Cheeger cut gives
\begin{equation*}
    \{v_1,v_2,v_3\}\quad\text{and}\quad \{v_4,v_5,v_6\},
\end{equation*}as one would expect.

\begin{figure}
    \centering
    \includegraphics[width=9cm]{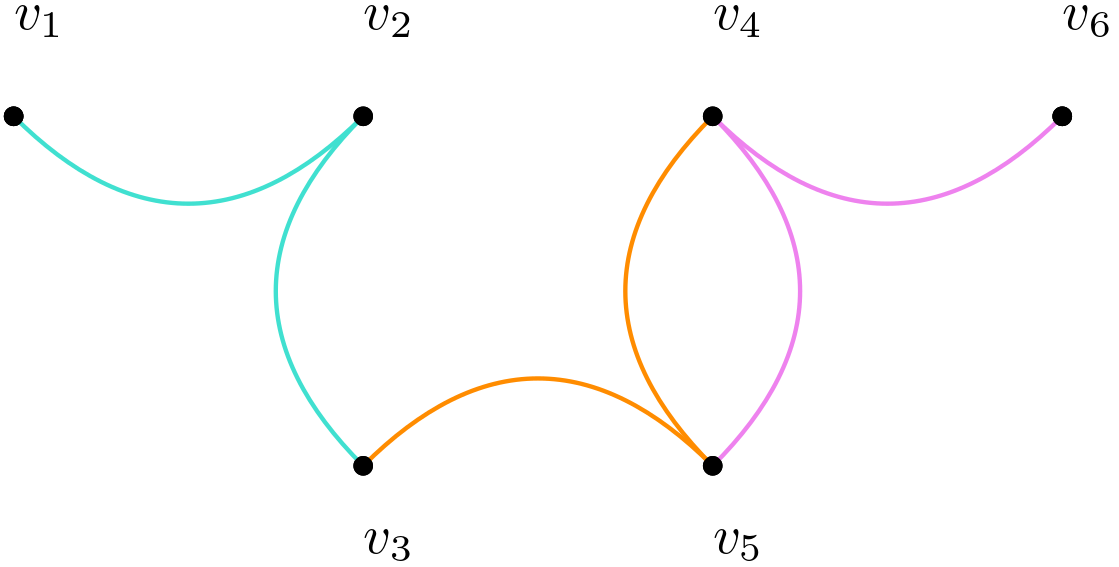}
    \caption{The hypergraph in Example \ref{ex:hyp}.}
    \label{fig:hyp}
\end{figure}
\end{example}

\subsection{Key idea} As we argued in the Introduction, the key idea used in this paper is to first reformulate the graph Cheeger inequalities in terms of the signless normalized Laplacian and then generalize them for the second largest eigenvalue of the hypergraph normalized Laplacian. The first step is fundamental. In \cite{MulasZhang}, for instance, an attempt to formulate generalized Cheeger inequalities in terms of the first nonzero eigenvalue of the hypergraph normalized Laplacian was made, but it led to generalizations for restricted classes of hypergraphs, either only for the Cheeger upper bound or only for the Cheeger lower bound. The reason is that the properties of the smallest eigenvalues of the graph Laplacian are preserved, in the general case, by the largest eigenvalues of the Laplacian.\newline
This change of point of view can allow us also to generalize, to the case of uniform hypergraphs, the fact that the multiplicity of $0$ for $L$ counts the number of connected components of a simple graph \cite{Chung}. In terms of the signless Laplacian, this is equivalent to saying that the multiplicity of $2$ for $L^+$ counts the number of connected components of a simple graph and, on its turn, this is equivalent to saying that the multiplicity of $2$ of $L$ equals the number of connected components in the case of a signed graph in which each edge has a positive sign. While this property cannot be generalized for hypergraphs in terms of the multiplicity of $0$ (cf.\ \cite{Hypergraphs}), it can be generalized in terms of the multiplicity of $\lambda_n$, as follows.
\begin{theorem}
If $\Gamma$ is a $k$-uniform hypergraph, then the multiplicity of $k$ equals the number of connected components of $\Gamma$.
\end{theorem}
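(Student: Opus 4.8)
The plan is to reduce the statement to the connected case and then to show that, for a connected $k$-uniform hypergraph, $k$ is an eigenvalue of $L$ of multiplicity exactly one. For the reduction, recall from \cite{Sharp} that the spectrum of $L(\Gamma)$, counted with multiplicity, is the disjoint union of the spectra of the normalized Laplacians of the connected components $\Gamma_1,\dots,\Gamma_c$ of $\Gamma$; hence the multiplicity of $k$ as an eigenvalue of $L(\Gamma)$ equals $\sum_{i=1}^{c}m_i$, where $m_i$ is the multiplicity of $k$ as an eigenvalue of $L(\Gamma_i)$. So it suffices to prove that $m_i=1$ for every connected $k$-uniform hypergraph $\Gamma_i$.

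For the connected case I would argue via a maximum principle for the eigenvalue equation. Writing $L=\id-D^{-1}A$ and using that the adjacency matrix of a classical hypergraph has $A_{vw}=-\bigl|\{e\in E:v,w\in e\}\bigr|$ for $v\neq w$ and $A_{vv}=0$, the equation $Lf=kf$ for $f:V\to\mathbb{R}$ is equivalent to
\begin{equation*}
\sum_{e\ni v}\Bigl(\sum_{w\in e}f(w)\Bigr)=k\deg(v)f(v)\qquad\text{for all }v\in V.
\end{equation*}
Now let $v$ be a vertex at which $f$ attains its maximum $M:=\max_{w\in V}f(w)$. Since $|e|=k$, each inner sum satisfies $\sum_{w\in e}f(w)\le kM$, and $v$ lies in $\deg(v)$ edges, so the left-hand side is at most $k\deg(v)M=k\deg(v)f(v)$, with equality. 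Equality forces $\sum_{w\in e}f(w)=kM$, hence $f(w)=M$ for every $w\in e$, for each edge $e\ni v$. Thus the set $\{w\in V:f(w)=M\}$ contains, together with any of its vertices, every vertex sharing an edge with it; by connectedness it equals $V$, so $f\equiv M$ is constant. Conversely, the constant functions are eigenfunctions with eigenvalue $k$ (as recalled in Section \ref{Section:def}, or directly: $\RQ$ of a constant equals $k$, while $\RQ(f)\le k$ for every $f$ by the Cauchy--Schwarz inequality, so $k$ is the top eigenvalue). Therefore the $k$-eigenspace of $L(\Gamma_i)$ is exactly the line of constants, i.e.\ $m_i=1$.

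Combining the two steps gives that the multiplicity of $k$ for $L(\Gamma)$ is $\sum_{i=1}^{c}1=c$, the number of connected components, as claimed. The one point that requires a little care is the bookkeeping in the reduction: an isolated vertex would be its own connected component yet contribute the eigenvalue $0$ rather than $k$, so I would state the theorem (as elsewhere in the paper) under the standing assumption that $\Gamma$ has no vertices of degree zero. Apart from this, the argument above is essentially the whole proof; the maximum-principle step—which mirrors the graph fact that the signless Laplacian $L^+$ has $2$ with multiplicity equal to the number of components—is where the uniformity hypothesis $|e|=k$ enters in an essential way, since it is exactly what makes $\sum_{w\in e}f(w)\le kM$ sharp only when $f$ is constant on $e$.
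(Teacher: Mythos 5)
Your proposal is correct, and it follows the same skeleton as the paper—decompose the spectrum over the connected components, then show that for a connected $k$-uniform hypergraph the eigenvalue $k$ has the constants as its exact eigenspace—but where the paper disposes of the connected case in one line by citing \cite{Sharp}, you prove it from scratch. Your reduction of $Lf=kf$ to $\sum_{e\ni v}\sum_{w\in e}f(w)=k\deg(v)f(v)$ is correct for a classical hypergraph (since $A_{vw}=-|\{e: v,w\in e\}|$ for $v\neq w$), and the equality analysis at a maximizer of $f$ does force $f\equiv M$ on every edge through that vertex, so the paper's notion of connectedness via vertex--edge paths propagates the maximum to all of $V$; note this step needs $\deg(v)\geq 1$ at the maximizer, which is covered by the no-isolated-vertex assumption. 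What your route buys is self-containedness and a transparent explanation of where uniformity enters (the bound $\sum_{w\in e}f(w)\leq kM$ is sharp only when $f$ is constant on $e$), mirroring the classical signless-Laplacian fact for graphs; what the paper's route buys is brevity, since \cite{Sharp} already records both that $\lambda_n=k$ and that its eigenfunctions are exactly the constants. Your caveat about degree-zero vertices is well taken and matches the paper's standing assumption (without it $D^{-1}$ is not even defined, and an isolated vertex would be a component contributing $0$ rather than $k$), so the theorem should indeed be read under that hypothesis.
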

\begin{proof}
It follows from the fact that, as shown in \cite{Sharp}, a connected $k$-uniform hypergraph has eigenvalue $\lambda_n=k$ and the corresponding eigenfunctions are exactly the constant functions.
\end{proof}
\subsection{Vertex cut for regular hypergraphs} Given a hypergraph $\Gamma=(V,E)$ on $n$ nodes $v_1,\ldots,v_n$ and $m$ edges $e_1,\ldots,e_m$, its \textsl{dual hypergraph} is $\Gamma^*:=(V^*,E^*)$, where:
\begin{itemize}
    \item $V^*:=\{v_1^*,\ldots,v_m^*\}$;
     \item $E^*:=\{e_1^*,\ldots,e_n^*\}$;
     \item $v_j^*\in e_j^*$ in $\Gamma^*$ if and only if $v_i\in e_j$ in $\Gamma$.
\end{itemize}Therefore, the vertices of $\Gamma$ correspond to the edges of $\Gamma^*$ and vice versa. In particular, if $\Gamma$ is $d$-regular, then $\Gamma^*$ is $d$-uniform. In this case, we can apply Theorem \ref{thm:main} to $\Gamma^*$ and the edge cut on $\Gamma^*$ can be translated into a \textsl{vertex cut} on $\Gamma$, as follows.

\begin{definition}Let $\Gamma=(V,E)$ be a $d$-regular hypergraph. Given $\emptyset \neq F\subsetneq E$, let $\bar{F}:=E\setminus F$, $\vol(F):=\sum_{e\in F}|e|$ and
\begin{equation*}
    V_r(F):=\{v\in V: v \text{ belongs to }r\text{ edges in }F\},
\end{equation*}for $r\in \{1,\ldots,d\}$. Let also
\begin{equation*}
    h_*(F):=\frac{\sum_{r=1}^{d-1}|V_r(F)|r(d-r)}{\min\{\vol(F),\vol(\bar{F})\}}.
\end{equation*}The \textsl{vertex Cheeger constant} of $\Gamma$ is
\begin{equation*}
    h_*:=\min_{\emptyset\neq F\subsetneq E}h_*(F).
\end{equation*}
\end{definition}
\begin{corollary}
Let $\Gamma$ be a connected, $d$-regular hypergraph. Then,
\begin{equation*}
    \frac{1}{2(d-1)} h_*^2\leq d-\lambda_{m-1}(\Gamma^*)\leq 2(d-1)h_*.
\end{equation*}
\end{corollary}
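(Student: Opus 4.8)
The plan is to apply Theorem~\ref{thm:main} directly to the dual hypergraph $\Gamma^*$ and then translate the edge-cut quantities appearing there back into the vertex-cut quantities on $\Gamma$. Concretely, I expect the proof to reduce to two tasks: (i) checking that $\Gamma^*$ satisfies all the standing hypotheses under which Theorem~\ref{thm:main} was proved (connected, $d$-uniform, no vertex of degree zero, and the technical inequality~\eqref{eq:assumption}), and (ii) showing that the Cheeger constant $h(\Gamma^*)$ of the dual coincides with the vertex Cheeger constant $h_*$ of $\Gamma$.

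For (i), I would first record the elementary combinatorics of dualization. If $\Gamma=(V,E)$ is $d$-regular with $|V|=n$, $|E|=m$, then for a dual vertex $v_j^*$ the dual edges containing it are exactly the $e_i^*$ with $v_i\in e_j$, so $\deg_{\Gamma^*}(v_j^*)=|e_j|$; dually $|e_i^*|=\deg_\Gamma(v_i)=d$, so $\Gamma^*$ is $d$-uniform on $m$ vertices, with no vertex of degree zero since the edges of $\Gamma$ are nonempty. Connectedness of $\Gamma^*$ follows from that of $\Gamma$: given dual vertices $v_i^*,v_j^*$, pick $v\in e_i$, $w\in e_j$ and a path $v=w_1,\dots,w_p=w$ in $\Gamma$ with $\{w_t,w_{t+1}\}\subseteq f_t$; then $e_i,f_1,\dots,f_{p-1},e_j$ is a sequence of edges of $\Gamma$ in which consecutive members share a vertex, which is exactly a path from $v_i^*$ to $v_j^*$ in $\Gamma^*$. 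Finally, hypothesis~\eqref{eq:assumption} for $\Gamma^*$ reads $|e_j|\le\sum_{\ell\ne j}|e_\ell|$; since $\sum_\ell|e_\ell|=\sum_{v\in V}\deg_\Gamma(v)=nd$ and $|e_j|\le n$, this holds as soon as $d\ge 2$ (and the case $d=1$ is excluded, the factor $d-1$ then being zero). With these checks, Theorem~\ref{thm:main} applied to $\Gamma^*$, together with $\lambda_m(\Gamma^*)=d$, yields $\frac{1}{2(d-1)}h(\Gamma^*)^2\le d-\lambda_{m-1}(\Gamma^*)\le 2(d-1)h(\Gamma^*)$.

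For (ii), subsets $\emptyset\ne S\subsetneq V^*$ correspond bijectively to subsets $\emptyset\ne F\subsetneq E$ via $v_j^*\in S\iff e_j\in F$. Under this bijection $\vol_{\Gamma^*}(S)=\sum_{e_j\in F}|e_j|=\vol(F)$ and $\vol_{\Gamma^*}(\bar S)=\vol(\bar F)$; moreover a dual edge $e_i^*$ satisfies $|e_i^*\cap S|=r$ precisely when $v_i$ lies in exactly $r$ edges of $F$, i.e. $v_i\in V_r(F)$, so $|E_r(S)|=|V_r(F)|$ for every $r$. Substituting these equalities into the definition of $h(S)$ for $\Gamma^*$ gives exactly $h_*(F)$, and minimizing over the two sides of the bijection gives $h(\Gamma^*)=h_*$. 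Plugging this into the inequality from (i) proves the Corollary. I do not anticipate a genuine obstacle here: the argument is a transfer of Theorem~\ref{thm:main} across duality, and the only points that need care are the verification of connectedness and of~\eqref{eq:assumption} for $\Gamma^*$ and the bookkeeping in the $S\leftrightarrow F$ dictionary.
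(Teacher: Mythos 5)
Your proposal is correct and follows exactly the paper's route: the corollary is obtained by applying Theorem~\ref{thm:main} to the dual hypergraph $\Gamma^*$. You merely spell out the details the paper leaves implicit (that $\Gamma^*$ is connected, $d$-uniform, satisfies \eqref{eq:assumption}, and that the $S\leftrightarrow F$ dictionary gives $h(\Gamma^*)=h_*$), and these verifications are all accurate.
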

\begin{proof}
It follows from Theorem \ref{thm:main} applied to $\Gamma^*$.
\end{proof}In particular, using the signs of an eigenfunction of $\lambda_{m-1}(\Gamma^*)$, one can give an edge cut for $\Gamma^*$ corresponding to a vertex cut for $\Gamma$.

\subsection{Bipartite uniform hypergraphs}
For future directions, it will be interesting to see whether the results presented here could be extended to classical hypergraphs that are not necessarily uniform and, more generally, to oriented hypergraphs. We can already say something for \textsl{bipartite hypergraphs:} oriented hypergraphs whose vertex set can be partitioned into two disjoint subsets as $V=V_1\sqcup V_2$, such that each edge contain all its positive incidences in $V_1$ and all its negative incidences in $V_2$, or vice versa. Bipartite hypergraphs generalize bipartite graphs and, as shown in \cite{AndreottiMulas}, a bipartite hypergraph $\Gamma=(V,E,\psi_\Gamma)$ has the same spectrum as $\Gamma^+:=(V,E,\psi_{\Gamma^+})$, where $\psi^+$ is such that
	\begin{equation*}
	    \psi_{\Gamma^+}(v,e)= 1 \iff v\in e.
	\end{equation*}This implies that the Cheeger inequalities in Theorem \ref{thm:main} also hold for bipartite $k$-uniform hypergraphs. However, since the eigenfunctions of $\Gamma$ and $\Gamma^+$ differ by changes of signs, in this case we cannot approximate the Cheeger cut by 
	\begin{equation*}
    \{v\in V:f(v)\geq 0\} \quad \text{and} \quad \{v\in V:f(v)<0\},
\end{equation*}for a given eigenfunction $f:V\rightarrow \mathbb{R}$ of $\lambda_{n-1}$.

\section{Proof of the upper bound}\label{Section:Upper}
\begin{theorem}Let $\Gamma$ be a connected, $k$-uniform hypergraph. Then,
\begin{equation*}
   k-\lambda_{n-1}\leq 2(k-1)h.
\end{equation*}\end{theorem}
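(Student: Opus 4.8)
The goal is to show $k - \lambda_{n-1} \le 2(k-1)h$. By the variational characterization \eqref{eq:n-1}, it suffices to produce a test function $f \perp \mathbf{1}$ with $\RQ(f)$ close to $k$; equivalently, since $k - \RQ(f) = \frac{\sum_v \deg(v)f(v)^2 \cdot k - \sum_e (\sum_{v\in e} f(v))^2}{\sum_v \deg(v)f(v)^2}$, I want to bound the numerator from above by $2(k-1)h \sum_v \deg(v) f(v)^2$. The natural test function mimics the graph case: let $S$ be a set achieving the Cheeger constant $h = h(S)$, assume WLOG $\vol(S) \le \vol(\bar S)$, and take $f$ to be (a scaled version of) the indicator-type function $f = a\mathbf{1}_S - b\mathbf{1}_{\bar S}$ with $a,b > 0$ chosen so that $f \perp \mathbf{1}$, i.e.\ $a\,\vol(S) = b\,\vol(\bar S)$. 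This is the signless-Laplacian analogue of Chung's Lemma 2.1 test function.

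**Key steps.** First I would rewrite the "$k$ minus Rayleigh quotient" numerator in a useful edgewise form: using that $k\sum_{v}\deg(v)f(v)^2 = k\sum_e \sum_{v\in e} f(v)^2$ (each vertex $v$ appears in $\deg(v)$ edges and each edge has $k$ vertices), one gets
\begin{equation*}
k\sum_{v\in V}\deg(v)f(v)^2 - \sum_{e\in E}\Bigl(\sum_{v\in e}f(v)\Bigr)^2 = \sum_{e\in E}\Bigl(k\sum_{v\in e}f(v)^2 - \bigl(\sum_{v\in e}f(v)\bigr)^2\Bigr) = \sum_{e\in E}\sum_{\{v,w\}\subseteq e}(f(v)-f(w))^2,
\end{equation*}
where the last equality is the standard identity $k\sum_{i} x_i^2 - (\sum_i x_i)^2 = \sum_{i<j}(x_i-x_j)^2$ for $k$ numbers. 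Now for the two-valued test function $f = a\mathbf{1}_S - b\mathbf{1}_{\bar S}$, within an edge $e$ with $|e \cap S| = r$, the only nonzero terms $(f(v)-f(w))^2$ come from pairs straddling $S$ and $\bar S$, each equal to $(a+b)^2$, and there are exactly $r(k-r)$ such pairs. Hence the numerator equals $(a+b)^2 \sum_{r=1}^{k-1} |E_r(S)|\, r(k-r) = (a+b)^2 \cdot h(S)\cdot \vol(S)$ by the definition of $h(S)$ (recall $\min = \vol(S)$). Meanwhile the denominator is $a^2\vol(S) + b^2\vol(\bar S)$. Using the orthogonality relation $a\vol(S) = b\vol(\bar S)$ to eliminate $b$, a short computation gives $\frac{(a+b)^2\vol(S)}{a^2\vol(S)+b^2\vol(\bar S)} = \frac{\vol(S)+\vol(\bar S)}{\vol(\bar S)} = 1 + \frac{\vol(S)}{\vol(\bar S)} \le 2$, and therefore $k - \lambda_{n-1} \le k - \RQ(f) \le 2h$.

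**The gap and the main obstacle.** The above clean argument only yields $k - \lambda_{n-1} \le 2h$, which is \emph{stronger} than the claimed $2(k-1)h$ — so if the two-valued test function worked this directly we would be done with room to spare. The subtlety (and the reason the paper says the hypergraph proof is "much longer and more complicated") is presumably that $k - \lambda_{n-1}$ is \emph{not} simply $\max_{f\perp\mathbf 1}\bigl(k - \RQ(f)\bigr)$ in the naive way I used, or that an extra factor is genuinely lost somewhere — most likely in how the edgewise decomposition interacts with the normalization, since for hypergraphs the identity $k\sum x_i^2 - (\sum x_i)^2 = \sum_{i<j}(x_i - x_j)^2$ holds but the passage $k\sum_v\deg(v)f(v)^2 = k\sum_e\sum_{v\in e}f(v)^2$ is exact only because $|e| = k$. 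I expect the main obstacle to be verifying carefully that no hidden constant worse than $2(k-1)$ creeps in, and in particular checking that the two-sided test function is legitimate and that one does not need the weaker $2(k-1)h$ bound to absorb some error term arising from a more delicate multi-valued or truncated test function (the kind used when $S$ is replaced by a sublevel set of a general function). I would first attempt the direct two-valued argument above; if it indeed gives $2h \le 2(k-1)h$ for $k \ge 2$ the theorem follows immediately, and otherwise I would fall back on a level-set / thresholding argument on the eigenfunction itself, bounding $\sum_e \sum_{\{v,w\}\subseteq e}(f(v)-f(w))^2$ in terms of the Cheeger cuts of its sublevel sets and paying the factor $(k-1)$ in the Cauchy–Schwarz step that converts pairwise differences within a $k$-edge into a telescoping sum.
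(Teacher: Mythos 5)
Your proposal is correct, and the hedging in your final paragraph is unnecessary: the direct two-valued argument you sketch does go through. The paper uses the very same test function (it takes $a=1$, $b=\alpha:=\vol(S)/\vol(\bar S)$) and the same variational step $\lambda_{n-1}\ge\RQ(f)$ from \eqref{eq:n-1}, which is all that is needed --- there is no problem with the ``naive'' use of the max characterization, since only the inequality for your specific $f\perp\mathbf{1}$ is invoked. Where you genuinely differ is in how $k-\RQ(f)$ is evaluated: you compute it exactly, via the per-edge identity $k\sum_{v\in e}f(v)^2-\bigl(\sum_{v\in e}f(v)\bigr)^2=\sum_{\{v,w\}\subseteq e}\bigl(f(v)-f(w)\bigr)^2$ (valid because $|e|=k$) together with $\sum_{v}\deg(v)f(v)^2=\sum_{e}\sum_{v\in e}f(v)^2$, which yields $k-\RQ(f)=(1+\alpha)\,h\le 2h$; the paper instead expands $\bigl(k-|e\cap\bar S|(\alpha+1)\bigr)^2$ and throws away terms along the way (using $r^2\ge r$, $r\le r(k-r)$, $\alpha+1\le 2$), which is exactly why it only reaches the weaker constant $2(k-1)$. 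So your route is cleaner and in fact proves the strictly stronger estimate $k-\lambda_{n-1}\le 2h$, from which the stated theorem follows immediately since $2h\le 2(k-1)h$ for $k\ge 2$ (automatic for a connected uniform hypergraph on more than one vertex); no hidden factor of $k-1$ appears, and your fallback level-set argument is not needed for the upper bound. It is worth noting that the pairwise-difference identity you rely on is the same one the paper itself uses in the lower-bound proof when rewriting $k-\RQ(f_+)$ as a sum of squared differences over pairs inside edges, so your argument sits entirely within the paper's framework while improving the $k$-dependence of its upper Cheeger inequality.
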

\begin{proof}
Let $\emptyset\neq S\subsetneq V$ be such that $h=h(S)=h(\bar{S})$, and assume, without loss of generality, that $\vol(S)\leq \vol(\bar{S})$. Let
\begin{equation*}
    \alpha:=\frac{\vol (S)}{\vol (\bar{S})}\leq 1
\end{equation*}and let $f$ be a function on $V$ defined by
\begin{equation*}
    f(v):=\begin{cases}
    1 &\text{ if }v\in S\\
-\alpha&\text{ if }v\in \bar{S}.
    \end{cases}
\end{equation*}By construction of $f$, $\sum_{v\in V}\deg(v) f(v)=0$, that is, $f$ is orthogonal to the constants. Thus, by \eqref{eq:n-1},
\begin{align*}
    \lambda_{n-1}&\geq \RQ(f)\\
    &=\frac{\sum_{e\in E}\bigl(\sum_{v\in e}f(v)\bigr)^2}{\sum_{v\in V}\deg(v)f(v)^2}\\
    &=\frac{\sum_{e\in E}\bigl(\sum_{v\in e\cap S}1-\sum_{v\in e\cap \bar{S}}\alpha\bigr)^2}{\vol(S)+\alpha^2\vol(\bar{S})}\\
 \bigl(\text{by }\alpha^2\vol(\bar{S})=\alpha \vol(S) \bigr)  &=\frac{\sum_{e\in E}\bigl(|e\cap S|-\alpha|e\cap \bar{S}|\bigr)^2}{(\alpha+1)\vol(S)}\\
   \bigl(\text{by }|e|=k\,\,\forall e\in E \bigr) &=\frac{\sum_{e\in E}\bigl(k-|e\cap \bar{S}| -\alpha |e\cap \bar{S}|\bigr)^2}{(\alpha+1)\vol(S)}\\
     &=\frac{\sum_{e\in E}\bigl(k-|e\cap \bar{S}|\cdot (\alpha+1) \bigr)^2}{(\alpha+1)\vol(S)}\\
  &=\frac{|E| k^2}{(\alpha+1)\vol(S)}+\frac{\sum_{e\in E}|e\cap \bar{S}|^2(\alpha+1)}{\vol(S)}-2k\cdot \frac{\sum_{e\in E}|e\cap \bar{S}| }{\vol(S)}\\
 \bigl(\text{by }\sum_{e\in E}|e\cap \bar{S}|=\vol(\bar{S}) \bigr)\quad &=\frac{|E| k^2}{(\alpha+1)\vol(S)}+\frac{\sum_{e\in E}|e\cap \bar{S}|^2(\alpha+1)}{\vol(S)}-\frac{2k}{\alpha}\\
&=\frac{|E| k^2}{(\alpha+1)\vol(S)}+\frac{\sum_{r=1}^k\sum_{e\in E:|e\cap \bar{S}|=r}r^2(\alpha+1)}{\vol(S)}-\frac{2k}{\alpha}\\
\bigl(\text{by }|E|k=\vol(V) \bigr)\quad &=\frac{k\cdot \vol(V)}{(\alpha+1)\vol(S)}+ \frac{(\alpha+1)\cdot\sum_{r=1}^k|E_r(\bar{S})|r^2}{\vol(S)}-\frac{2k}{\alpha}.
\end{align*}
Now, observe that
\begin{equation*}
    \frac{k\cdot \vol(V)}{(\alpha+1)\vol(S)}=\frac{k\cdot( \vol(S)+\vol(\bar{S}))}{(\alpha+1)\vol(S)}=\frac{k}{\alpha+1}+\frac{k}{\alpha(\alpha+1)}
\end{equation*}and we have that
\begin{equation*}
    \frac{k}{\alpha+1}+\frac{k}{\alpha(\alpha+1)}-\frac{2k}{\alpha}=\frac{k(\alpha+1-2\alpha-2)}{\alpha(\alpha+1)}=-\frac{k}{\alpha}.
\end{equation*}Therefore, by putting everything together,
\begin{align*}
    \lambda_{n-1}&\geq \RQ(f)\\
    &=\frac{k\cdot \vol(V)}{(\alpha+1)\vol(S)}+ \frac{(\alpha+1)\cdot\sum_{r=1}^k|E_r(\bar{S})|r^2}{\vol(S)}-\frac{2k}{\alpha}\\
    &=\frac{(\alpha+1)\cdot\sum_{r=1}^k|E_r(\bar{S})|r^2}{\vol(S)}-\frac{k}{\alpha}\\
    &= \frac{(\alpha+1)\cdot\sum_{r=1}^{k-1}|E_r(\bar{S})|r^2}{\vol(S)}+\frac{(\alpha+1)\cdot|E_k(\bar{S})|k^2}{\vol(S)}-\frac{k}{\alpha}\\
    &\geq \frac{(\alpha+1)\cdot\sum_{r=1}^{k-1}|E_r(\bar{S})|r}{\vol(S)}+\frac{(\alpha+1)\cdot |E_k(\bar{S})|k}{\vol(S)}+\frac{(\alpha+1)\cdot|E_k(\bar{S})|k(k-1)}{\vol(S)}-\frac{k}{\alpha}\\
    &=\frac{(\alpha+1)\cdot\sum_{r=1}^{k}|E_r(\bar{S})|r}{\vol(S)}+\frac{(\alpha+1)\cdot|E_k(\bar{S})|k(k-1)}{\vol(S)}-\frac{k}{\alpha}.
\end{align*}Now, since $\vol(\bar{S})=\sum_{r=1}^k|E_r(\bar{S})|r$,
\begin{equation*}
    |E_k(\bar{S})|k=\vol(\bar{S})-\sum_{r=1}^{k-1}|E_r(\bar{S})|r.
\end{equation*}Hence,
\begin{align*}
    \lambda_{n-1}&\geq 
    \frac{(\alpha+1)\vol(\bar{S})}{\vol(S)}+(\alpha+1)\cdot (k-1)\cdot\Biggl(\frac{|E_k(\bar{S})|k}{\vol(S)}\Biggr)-\frac{k}{\alpha}\\
    &= \frac{(\alpha+1)\vol(\bar{S})}{\vol(S)}+(\alpha+1)\cdot (k-1)\cdot\Biggl(\frac{\vol(\bar{S})}{\vol(S)}-\frac{\sum_{r=1}^{k-1}|E_r(\bar{S})|r}{\vol(S)}\Biggr)-\frac{k}{\alpha}\\
    &=\frac{(\alpha+1)}{\alpha}-\frac{k}{\alpha}+(\alpha+1)\cdot (k-1)\cdot\Biggl(\frac{1}{\alpha}-\frac{\sum_{r=1}^{k-1}|E_r(\bar{S})|r}{\vol(S)}\Biggr)\\
     &=\frac{\alpha+1-k+(\alpha+1)(k-1)}{\alpha}-(\alpha+1)\cdot (k-1)\cdot\Biggl(\frac{\sum_{r=1}^{k-1}|E_r(\bar{S})|r}{\vol(S)}\Biggr)\\
    &=k-(\alpha+1)\cdot (k-1)\cdot\Biggl(\frac{\sum_{r=1}^{k-1}|E_r(\bar{S})|r}{\vol(S)}\Biggr)\\
   \bigl(\text{by }\alpha+1\leq 2\bigr)\quad &\geq k-2(k-1)\Biggl(\frac{\sum_{r=1}^{k-1}|E_r(\bar{S})|r}{\vol(S)}\Biggr)\\
   &\geq k-2(k-1)\Biggl(\frac{\sum_{r=1}^{k-1}|E_r(\bar{S})|r(k-r)}{\vol(S)}\Biggr)\\
   &=k-2(k-1)h.
\end{align*}The claim follows.
\end{proof}

\section{Proof of the lower bound}\label{Section:Lower}
\begin{theorem}Let $\Gamma$ be a connected, $k$-uniform hypergraph. Then,
\begin{equation*}
 k-\lambda_{n-1}\geq \frac{1}{2(k-1)} h^2.
\end{equation*}\end{theorem}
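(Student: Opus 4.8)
The plan is to follow Chung's proof of the graph lower Cheeger bound (\cite[Theorem~2.2]{Chung}), applied to the ``clique-expansion'' quadratic form hidden inside $L(\Gamma)$. First I would fix an eigenfunction $f$ of $\lambda_{n-1}$; since $f\perp\mathbf 1$, $f\neq 0$, and every $\deg(v)>0$, the function $f$ attains both a strictly positive and a strictly negative value. Writing $Lf=\lambda_{n-1}f$ explicitly and using $|e|=k$ for all $e$, the eigenvalue equation rearranges into
\begin{equation*}
\sum_{e\ni v}\ \sum_{w\in e\setminus\{v\}}\bigl(f(v)-f(w)\bigr)\;=\;(k-\lambda_{n-1})\,\deg(v)\,f(v)\qquad\text{for every }v\in V .
\end{equation*}
This is the hypergraph analogue of the equation satisfied by a graph eigenfunction, and it is the identity everything will be reduced to.

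Next I would set $Q(g):=\sum_{e\in E}\sum_{\{u,w\}\subseteq e}(g(u)-g(w))^2$ and $W(g):=\sum_{v\in V}\deg(v)g(v)^2$; expanding the square and using $|e|=k$ gives $Q(g)=k\,W(g)-\sum_{e\in E}\bigl(\sum_{v\in e}g(v)\bigr)^2$, so in particular $k-\lambda_{n-1}=Q(f)/W(f)$. After replacing $f$ by $-f$ if needed, assume $\vol(\{f>0\})\le\vol(V)/2$ and put $g:=\max(f,0)\not\equiv 0$. The first main step is a Chung-type truncation inequality, $Q(g)\le(k-\lambda_{n-1})\,W(g)$. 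To get it I would rewrite $Q(g)=\sum_{v\in V}g(v)\sum_{e\ni v}\sum_{w\in e\setminus\{v\}}(g(v)-g(w))$, drop the terms with $g(v)=0$, and on $\{g>0\}$ use $g(v)=f(v)$ together with $g\ge f$ pointwise to bound, for each such $v$,
\begin{equation*}
\sum_{e\ni v}\sum_{w\in e\setminus\{v\}}\bigl(g(v)-g(w)\bigr)=(k-\lambda_{n-1})\deg(v)f(v)-\sum_{e\ni v}\sum_{w\in e\setminus\{v\}}\bigl(g(w)-f(w)\bigr)\le(k-\lambda_{n-1})\deg(v)f(v);
\end{equation*}
multiplying by $g(v)>0$ and summing over $v$ yields the claim.

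The second main step is the bound $Q(g)\ge\frac{h^2}{2(k-1)}\,W(g)$, obtained by a Cauchy--Schwarz/co-area argument. Order the vertices so that $g(v_1)\ge\cdots\ge g(v_n)\ge 0$, write $c_i:=g(v_i)^2$ and $S_l:=\{v_1,\dots,v_l\}$. Cauchy--Schwarz over the index set of incident pairs $(\{u,w\},e)$ gives $\bigl(\sum_{e}\sum_{\{u,w\}\subseteq e}|g(u)^2-g(w)^2|\bigr)^2\le Q(g)\cdot P(g)$, where $P(g):=\sum_{e}\sum_{\{u,w\}\subseteq e}(g(u)+g(w))^2\le 2\sum_e\sum_{\{u,w\}\subseteq e}(g(u)^2+g(w)^2)=2(k-1)\,W(g)$. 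For the left-hand sum, telescoping $c_i-c_j$ and noting that within a fixed edge $e$ exactly $|e\cap S_l|\,|e\cap\bar S_l|$ pairs straddle the $l$-th threshold,
\begin{equation*}
\sum_{e\in E}\sum_{\{u,w\}\subseteq e}|g(u)^2-g(w)^2|=\sum_{l=1}^{n-1}(c_l-c_{l+1})\sum_{r=1}^{k-1}|E_r(S_l)|\,r(k-r)\ \ge\ h\sum_{l=1}^{n-1}(c_l-c_{l+1})\vol(S_l)=h\,W(g),
\end{equation*}
where the inequality uses that, whenever $c_l>c_{l+1}$, the set $S_l$ lies inside $\{f>0\}$, so $\min\{\vol(S_l),\vol(\bar S_l)\}=\vol(S_l)$ (this sign normalization and volume comparison is where \eqref{eq:assumption} is the relevant hypothesis) and hence $\sum_{r=1}^{k-1}|E_r(S_l)|\,r(k-r)=h(S_l)\vol(S_l)\ge h\,\vol(S_l)$; the last equality is Abel summation together with $c_n=0$ (because $\{f>0\}\neq V$). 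Combining the two displays gives $Q(g)\ge (h\,W(g))^2/\bigl(2(k-1)W(g)\bigr)$, and therefore
\begin{equation*}
k-\lambda_{n-1}\ \ge\ \frac{Q(g)}{W(g)}\ \ge\ \frac{h^2}{2(k-1)} .
\end{equation*}

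The hard part is the truncation inequality of the second paragraph: one must argue that passing from the eigenfunction $f$ to its positive part $g$ does not increase the ratio $Q(\cdot)/W(\cdot)$, and this holds only because of the precise algebraic shape of the eigenvalue equation together with $g\ge f$ --- plain Rayleigh-quotient monotonicity under truncation is false. The bookkeeping in the co-area identity --- recognizing $\sum_e|e\cap S_l|\,|e\cap\bar S_l|$ as the Cheeger numerator $\sum_{r=1}^{k-1}|E_r(S_l)|\,r(k-r)$ --- is the other delicate point, though it is exactly the counting of pairwise connections that motivates the definition of $h(S)$.
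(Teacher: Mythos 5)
Your proposal is correct, but its first half takes a genuinely different route from the paper's. The paper never invokes the eigenvalue equation pointwise: it works only with the Rayleigh-quotient characterization \eqref{eq:n-1}, shifts $f$ by its value $f(v_t)$ at a volume-median vertex, splits the shifted function into $f_+$ and $f_-$, proves the comparison inequality \eqref{eq:todo}, and thereby reduces to bounding $k-\max\{\RQ(f_+),\RQ(f_-)\}$; this is where assumption \eqref{eq:assumption} is actually used (to define $t$ and in the final telescoping). You instead use the componentwise identity $\sum_{e\ni v}\sum_{w\in e\setminus\{v\}}(f(v)-f(w))=(k-\lambda_{n-1})\deg(v)f(v)$, which is indeed what $Lf=\lambda_{n-1}f$ gives since $A_{vw}=-|E(v,w)|$, and prove the truncation bound $Q(g)\le(k-\lambda_{n-1})W(g)$ for $g=\max(f,0)$ directly; this replaces the paper's shift-and-split argument and is closer to Chung's original graph proof. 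I checked the delicate points and they are sound: the identity $Q(g)=\sum_{v}g(v)\sum_{e\ni v}\sum_{w\in e\setminus\{v\}}\bigl(g(v)-g(w)\bigr)$, the use of $g\ge f$ together with $g=f$ on $\{f>0\}$, the fact that $c_l>c_{l+1}$ forces $S_l\subseteq\{f>0\}$ so that $\min\{\vol(S_l),\vol(\bar{S_l})\}=\vol(S_l)$, and $c_n=0$ from $f$ attaining nonpositive values. The second half of your argument (Cauchy--Schwarz against $\sum_{e}\sum_{\{u,w\}\subseteq e}(g(u)+g(w))^2\le 2(k-1)W(g)$, the co-area identification of $\sum_e|e\cap S_l|\,|e\cap\bar{S_l}|$ with $\sum_{r=1}^{k-1}|E_r(S_l)|r(k-r)$, and Abel summation) parallels the paper exactly. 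What each approach buys: yours is shorter, avoids \eqref{eq:todo} and the $\max\{\RQ(f_+),\RQ(f_-)\}$ bookkeeping, and in fact never needs assumption \eqref{eq:assumption} at all --- your parenthetical crediting \eqref{eq:assumption} for the volume comparison is inaccurate, since that comparison comes entirely from your sign normalization $\vol(\{f>0\})\le\vol(V)/2$; the paper's argument, at the cost of \eqref{eq:assumption} and a longer computation, stays purely at the level of Rayleigh quotients and variational principles, never differentiating the eigenvalue equation vertex by vertex.
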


\begin{proof}
We follow and generalize the proof method of \cite[Theorem 2.2]{Chung}.\newline
Let $f$ be an eigenfunction for $L$ with eigenvalue $\lambda_{n-1}$. Without loss of generality, we relabel the vertices so that
\begin{equation*}
    f(v_i)\geq f(v_{i+1}),\,\text{ for }i=1,\ldots,n-1.
\end{equation*}Let $S_i:=\{v_1,\ldots,v_i\}$ and let 
\begin{equation*}
    t:=\max\{i:\vol(S_i)\leq \vol(\bar{S_i})\}.
\end{equation*}Since we are assuming \eqref{eq:assumption}, $t$ is well defined. Now, since $f$ is orthogonal to the constants, $\sum_{v\in V}f(v)\deg(v)=0$. Hence, 
\begin{align*}
    \sum_{v\in V}\deg(v)\biggl(f(v)+f(v_t)\biggr)^2&=\sum_{v\in V}\deg(v) f(v)^2+f(v_t)^2\vol(V)\\
    & \geq \sum_{v\in V}\deg(v) f(v)^2.
\end{align*}This implies that
\begin{align*}
    k-\lambda_{n-1}&=k-\frac{\sum_{e\in E}\bigl(\sum_{v\in e}f(v)\bigr)^2}{\sum_{v\in V}\deg(v)f(v)^2}\\
    &=\frac{k\cdot \sum_{v\in V}\deg(v)f(v)^2-\sum_{e\in E}\bigl(\sum_{v\in e}f(v)\bigr)^2}{\sum_{v\in V}\deg(v)f(v)^2}\\
    &\geq \frac{k\cdot \sum_{v\in V}\deg(v)f(v)^2-\sum_{e\in E}\bigl(\sum_{v\in e}f(v)\bigr)^2}{\sum_{v\in V}\deg(v)\biggl(f(v)+f(v_t)\biggr)^2}.
\end{align*}Now, for $v\in V$, let
\begin{equation*}
    f_+(v):=\begin{cases}f(v)+f(v_t) &\text{if }f(v)+f(v_t)\geq 0\\
    0 & \text{otherwise}
    \end{cases}
\end{equation*}and let
\begin{equation*}
    f_-(v):=\begin{cases}|f(v)+f(v_t)| &\text{if }f(v)+f(v_t)\leq 0\\
    0 & \text{otherwise.}
    \end{cases}
\end{equation*}Then, 
\begin{equation}\label{eq:+-1}
    f_+(v)+f_-(v)=|f(v)+f(v_t)| \quad \forall v\in V;
\end{equation}similarly
\begin{equation}\label{eq:+-2}
    f_+(v)^2+f_-(v)^2=\biggl(f(v)+f(v_t)\biggr)^2 \quad \forall v\in V
\end{equation}
and, by \eqref{eq:+-1},
\begin{equation*}
    \sum_{v\in V}\deg(v)\biggl(f(v)+f(v_t)\biggr)^2=\sum_{v\in V}\deg(v)\biggl(f_+(v)^2+f_-(v)^2\biggr).
\end{equation*}Moreover,
\begin{equation}\label{eq:todo}
  \sum_{e\in E}\Biggl(\sum_{v\in e}f(v)\Biggr)^2\leq \sum_{e\in E}\Biggl(\biggl(\sum_{v\in e}f_+(v)\biggr)^2+\biggl(\sum_{v\in e}f_-(v)\biggr)^2\Biggr)-|E|k^2f(v_t)^2.
\end{equation}
To see this, observe first that, for each $e\in E$,
\begin{align*}
&\biggl(\sum_{v\in e}f_+(v)\biggr)^2+\biggl(\sum_{v\in e}f_-(v)\biggr)^2\\
&=\sum_{v\in e}\biggl(f_+(v)^2+f_-(v)^2\biggr)+2\sum_{v\neq w:\{v,w\}\subseteq e}\biggl(f_+(v)f_+(w)+f_-(v)f_-(w)\biggr)\\
\bigl(\text{by \eqref{eq:+-2}}\bigr)\quad &=\sum_{v\in e}\biggl(f(v)+f(v_t)\biggr)^2+2\sum_{v\neq w:\{v,w\}\subseteq e}\biggl(f_+(v)f_+(w)+f_-(v)f_-(w)\biggr)\\
\bigl(\text{by construction of $f$}\bigr)\quad &\geq \sum_{v\in e}\biggl(f(v)+f(v_t)\biggr)^2+2\sum_{v\neq w:\{v,w\}\subseteq e}\biggl(f(v)+f(v_t)\biggr)\biggl(f(w)+f(v_t)\biggr)\\
&=\sum_{v\in e}f(v)^2+k\cdot f(v_t)^2+2f(v_t)\cdot \sum_{v\in e}f(v)+\\
&\quad +2\sum_{v\neq w:\{v,w\}\subseteq e}\Biggl(f(v)f(w)+f(v_t)f(v)+f(v_t)f(w)+f(v_t)^2\Biggr)\\
\bigl(\text{since }|e|=k\,\,\forall e\in E\bigr)\quad&=\sum_{v\in e}f(v)^2+k\cdot f(v_t)^2+2f(v_t)\cdot \sum_{v\in e}f(v)+\\
&\quad+2\sum_{v\neq w:\{v,w\}\subseteq e}f(v)f(w)+2f(v_t)(k-1)\sum_{v\in e}f(v)+k(k-1)f(v_t)^2\\
&=\sum_{v\in e}f(v)^2+2\sum_{v\neq w:\{v,w\}\subseteq e}f(v)f(w)+2f(v_t)k\cdot \sum_{v\in e}f(v)+k^2f(v_t)^2\\
&=\Biggl(\sum_{v\in e}f(v)\Biggr)^2+2f(v_t)k\cdot \sum_{v\in e}f(v)+k^2f(v_t)^2.
\end{align*}
In going from the third to the fourth line, we used the fact that, by definition of $f$,
\begin{equation}\label{eq:f+-}
    \biggl(f_+(v)f_+(w)+f_-(v)f_-(w)\biggr)\geq \biggl(f(v)+f(v_t)\biggr)\biggl(f(w)+f(v_t)\biggr), \text{ for }v\neq w.
\end{equation}
This can be seen in more detail by considering the following three cases.
\begin{itemize}
    \item Case 1:
    \begin{equation*}
      f(v)+f(v_t)\geq 0 \text{ and }f(w)+f(v_t)\geq 0.
    \end{equation*}In this case, by definition of $f$,
    \begin{equation*}
     f_+(v)f_+(w)+f_-(v)f_-(w)=  f_+(v)f_+(w)= \biggl(f(v)+f(v_t)\biggr)\biggl(f(w)+f(v_t)\biggr).
    \end{equation*}
    \item  Case 2:
    \begin{equation*}
      f(v)+f(v_t)\leq 0 \text{ and }f(w)+f(v_t)\leq 0.
    \end{equation*}In this case, by definition of $f$,
    \begin{equation*}
     f_+(v)f_+(w)+f_-(v)f_-(w)=  f_-(v)f_-(w)= \biggl|f(v)+f(v_t)\biggr|\cdot \biggl|f(w)+f(v_t)\biggr|.
    \end{equation*}
    \item Case 3:
    \begin{equation*}
      f(v)+f(v_t)\geq 0 \text{ and }f(w)+f(v_t)\leq 0, \text{ or vice versa}.
    \end{equation*}In this case, by definition of $f$,
    \begin{equation*}
     f_+(v)f_+(w)+f_-(v)f_-(w)=0,
    \end{equation*}while
    \begin{equation*}
        \biggl(f(v)+f(v_t)\biggr)\biggl(f(w)+f(v_t)\biggr)\leq 0.
    \end{equation*}
\end{itemize}
This proves \eqref{eq:f+-}. Therefore,
\begin{align*}
    &\sum_{e\in E}\Biggl(\biggl(\sum_{v\in e}f_+(v)\biggr)^2+\biggl(\sum_{v\in e}f_-(v)\biggr)^2\Biggr)\\
    &\geq \sum_{e\in E}\Biggl(\Biggl(\sum_{v\in e}f(v)\Biggr)^2+2f(v_t)k\cdot \sum_{v\in e}f(v)+k^2f(v_t)^2\Biggr)\\
    &=\sum_{e\in E}\Biggl(\sum_{v\in e}f(v)\Biggr)^2+2f(v_t)k\cdot \sum_{v\in V}\deg(v)f(v)+|E|k^2f(v_t)^2\\
    \bigl(\text{by $\sum_{v\in V}\deg(v)f(v)=0$}\bigr)\quad&=\sum_{e\in E}\Biggl(\sum_{v\in e}f(v)\Biggr)^2+|E|k^2f(v_t)^2.
\end{align*}Hence,
\begin{equation*}
    \sum_{e\in E}\Biggl(\sum_{v\in e}f(v)\Biggr)^2\leq \sum_{e\in E}\Biggl(\biggl(\sum_{v\in e}f_+(v)\biggr)^2+\biggl(\sum_{v\in e}f_-(v)\biggr)^2\Biggr)-|E|k^2f(v_t)^2.
\end{equation*}
This proves \eqref{eq:todo}. By putting everything together,
\begin{align*}
    k-\lambda_{n-1}&\geq \frac{k\cdot \sum_{v\in V}\deg(v)f(v)^2-\sum_{e\in E}\bigl(\sum_{v\in e}f(v)\bigr)^2}{\sum_{v\in V}\deg(v)\biggl(f(v)+f(v_t)\biggr)^2}\\
    &\geq \frac{k\cdot \sum_{v\in V}\deg(v)f(v)^2+|E|k^2f(v_t)^2-\sum_{e\in E}\Biggl(\bigl(\sum_{v\in e}f_+(v)\bigr)^2+\bigl(\sum_{v\in e}f_-(v)\bigr)^2\Biggr)}{\sum_{v\in V}\deg(v)\biggl(f_+(v)^2+f_-(v)^2\biggr)}\\
    &=\frac{k\cdot \sum_{v\in V}\deg(v)f(v)^2+|E|k^2f(v_t)^2}{\sum_{v\in V}\deg(v)\biggl(f(v)+f(v_t)\biggr)^2}-\frac{\sum_{e\in E}\Biggl(\bigl(\sum_{v\in e}f_+(v)\bigr)^2+\bigl(\sum_{v\in e}f_-(v)\bigr)^2\Biggr)}{\sum_{v\in V}\deg(v)\biggl(f_+(v)^2+f_-(v)^2\biggr)}\\
    &\geq \frac{k\cdot \sum_{v\in V}\deg(v)f(v)^2+|E|k^2f(v_t)^2}{\sum_{v\in V}\deg(v)\biggl(f(v)+f(v_t)\biggr)^2}-\max\{\RQ(f_+),\RQ(f_-)\},
\end{align*}since 
\begin{equation*}
    \frac{a+b}{c+d}\leq \max\{\frac{a}{c},\frac{b}{d}\}.
\end{equation*}Now assume, without loss of generality, that $\RQ(f_+)\geq \RQ(f_-)$. Then,
\begin{align*}
    k-\lambda_{n-1}&\geq \frac{k\cdot \sum_{v\in V}\deg(v)f(v)^2+|E|k^2f(v_t)^2}{\sum_{v\in V}\deg(v)\biggl(f(v)+f(v_t)\biggr)^2}-\RQ(f_+).
\end{align*}Now, by the orthogonality to the constants and since $\vol(V)=\sum_{v\in V}\deg(v)=|E|k$,
\begin{align*}
    \frac{k\cdot \sum_{v\in V}\deg(v)f(v)^2+|E|k^2f(v_t)^2}{\sum_{v\in V}\deg(v)\biggl(f(v)+f(v_t)\biggr)^2}&=\frac{k\cdot \sum_{v\in V}\deg(v)f(v)^2+|E|k^2f(v_t)^2}{\sum_{v\in V}\deg(v)\biggl(f(v)^2+f(v_t)^2\biggr)}\\
    &=k\cdot \frac{\sum_{v\in V}\deg(v)f(v)^2+|E|kf(v_t)^2}{\sum_{v\in V}\deg(v)f(v)^2+|E|kf(v_t)^2}\\
    &=k.
\end{align*}Hence, by letting $E(v,w)$ denote the set of edges that contain both $v$ and $w$,
\begin{align*}
k-\lambda_{n-1}&\geq k-\RQ(f_+)\\
    &=k-\frac{\sum_{e\in E}\biggl(\sum_{v\in e}f_+(v)\biggr)^2}{\sum_{v\in V}\deg(v)f_+(v)^2}\\
   &=\frac{k\cdot\biggl(\sum_{v\in V}\deg(v)f_+(v)^2\biggr)-\sum_{e\in E}\biggl(\sum_{v\in e}f_+(v)^2+2\sum_{\{v,w\}\subseteq e:v\neq w}f_+(v)f_+(w)\biggr)}{\sum_{v\in V}\deg(v)f_+(v)^2}\\
   &=\frac{(k-1)\cdot\biggl(\sum_{v\in V}\deg(v)f_+(v)^2\biggr)-2\sum_{v\neq w}|E(v,w)|f_+(v)f_+(w)}{\sum_{v\in V}\deg(v)f_+(v)^2}\\
   &=\frac{\sum_{e\in E}\sum_{\{v,w\}\subseteq e}\biggl(f_+(v)-f_+(w)\biggr)^2}{\sum_{v\in V}\deg(v)f_+(v)^2}\\
   &=\frac{\sum_{e\in E}\sum_{\{v,w\}\subseteq e}\biggl(f_+(v)-f_+(w)\biggr)^2}{\sum_{v\in V}\deg(v)f_+(v)^2}\cdot\frac{\sum_{e\in E}\sum_{\{v,w\}\subseteq e}\biggl(f_+(v)+f_+(w)\biggr)^2}{\sum_{e\in E}\sum_{\{v,w\}\subseteq e}\biggl(f_+(v)+f_+(w)\biggr)^2}\\
   &\geq \frac{\Biggl(\sum_{v\neq w}|E(v,w)|\cdot\biggl(f_+(v)-f_+(w)\biggr)^2\Biggr)\cdot \Biggl(\sum_{v\neq w}|E(v,w)|\cdot \biggl(f_+(v)+f_+(w)\biggr)^2\Biggr)}{2(k-1)\biggl(\sum_{v\in V}\deg(v)f_+(v)^2\biggr)^2},
    \end{align*}using the inequality $ \bigl(f_+(v)+f_+(w)\bigr)^2 \leq 2 \bigl(f_+(v)^2+f_+(w)^2\bigr)$ in the denominator.\newline Now, by the Cauchy-Schwarz inequality, the numerator in the last line is such that
    \begin{align*}
&\Biggl(\sum_{v\neq w}|E(v,w)|\cdot\biggl(f_+(v)-f_+(w)\biggr)^2\Biggr)\cdot \Biggl(\sum_{v\neq w}|E(v,w)|\cdot \biggl(f_+(v)+f_+(w)\biggr)^2\Biggr) \\
&\geq \Biggl(\sum_{v\neq w}|E(v,w)|\cdot\biggl(f_+(v)-f_+(w)\biggr)\biggl(f_+(v)+f_+(w)\biggr)\Biggr)^2\\
&=\Biggl(\sum_{v\neq w}|E(v,w)|\cdot\biggl(f_+(v)^2-f_+(w)^2\biggr)\Biggr)^2.
\end{align*}Hence,
\begin{equation*}
    k-\lambda_{n-1}\geq \frac{\Biggl(\sum_{v\neq w}|E(v,w)|\cdot\biggl(f_+(v)^2-f_+(w)^2\biggr)\Biggr)^2}{2(k-1)\biggl(\sum_{v\in V}\deg(v)f_+(v)^2\biggr)^2}.
\end{equation*}Now,
\begin{align*}
   &\sum_{v\neq w}|E(v,w)|\cdot\biggl(f_+(v)^2-f_+(w)^2\biggr)\\&=\sum_{a<c}|E(v_a,v_c)|\cdot\biggl(f_+(v_a)^2-f_+(v_c)^2\biggr)\\
   &=\sum_{a<c}|E(v_a,v_c)|\cdot\Biggl(\sum_{i=a}^{c-1}f_+(v_i)^2-f_+(v_{i+1})^2\Biggr)\\
   &=\sum_{a<c}\sum_{i=a}^{c-1}|E(v_a,v_c)|\cdot\biggl(f_+(v_i)^2-f_+(v_{i+1})^2\biggr)\\
   &=\sum_{i=1}^{n-1}\sum_{a\leq i}\sum_{c>i}|E(v_a,v_c)|\cdot\biggl(f_+(v_i)^2-f_+(v_{i+1})^2\biggr)\\
   &=\sum_{i=1}^{n-1}\sum_{v_a\in S_i}\sum_{v_c\in \bar{S_i}}|E(v_a,v_c)|\cdot\biggl(f_+(v_i)^2-f_+(v_{i+1})^2\biggr)\\
   &=\sum_{i=1}^{n-1} \sum_{r=1}^{k-1}r(k-r)|E_r(S_i)|\cdot\biggl(f_+(v_i)^2-f_+(v_{i+1})^2\biggr).
\end{align*}It follows that
\begin{equation*}
    k-\lambda_{n-1}\geq
\frac{\Biggl(\sum_{i=1}^{n-1}\sum_{r=1}^{k-1}r(k-r)|E_r(S_i)|\cdot\biggl(f_+(v_i)^2-f_+(v_{i+1})^2\biggr)\Biggr)^2}{2(k-1)\biggl(\sum_{v\in V}\deg(v)f_+(v)^2\biggr)^2}.
\end{equation*}Now, for each $i=1,\ldots,n-1$, we let \begin{equation*}
    |\delta(S_i)|:=\sum_{r=1}^{k-1}r(k-r)|E_r(S_i)| \quad\text{and}\quad \widetilde{\vol}(S_i):=\min\{\vol(S_i),\vol(\bar{S_i})\},
\end{equation*}so that
\begin{equation*}
    h(S_i)=\frac{|\delta(S_i)|}{\widetilde{\vol}(S_i)}\geq h.
\end{equation*}Then,
\begin{align*}
   & \Biggl(\sum_{i=1}^{n-1}\sum_{r=1}^{k-1}r(k-r)|E_r(S_i)|\cdot\biggl(f_+(v_i)^2-f_+(v_{i+1})^2\biggr)\Biggr)^2\\
   &=\Biggl(\sum_{i=1}^{n-1}|\delta(S_i)|\cdot\biggl(f_+(v_i)^2-f_+(v_{i+1})^2\biggr)\Biggr)^2\\
   &\geq \Biggl(\sum_{i=1}^{n-1} h\cdot \widetilde{\vol}(S_i)\cdot\biggl(f_+(v_i)^2-f_+(v_{i+1})^2\biggr)\Biggr)^2\\
   &=h^2\cdot \Biggl(\widetilde{\vol}(S_1)f_+(v_1)^2+\sum_{i=2}^n \biggl(\widetilde{\vol}(S_i)-\widetilde{\vol}(S_{i-1})\biggr)f_+(v_i)^2 \Biggr)^2\\
   &=h^2\cdot \Biggl(\sum_{i=1}^n \deg(v_i) f_+(v_i)^2 \Biggr)^2,
\end{align*}where in the last line we have used the assumption \eqref{eq:assumption}. Putting everything together,
\begin{align*}
    k-\lambda_{n-1}&\geq \frac{\Biggl(\sum_i\sum_{r=1}^{k-1}r(k-r)|E_r(S_i)|\cdot\biggl(f_+(v_i)^2-f_+(v_{i+1})^2\biggr)\Biggr)^2}{2(k-1)\biggl(\sum_{v\in V}\deg(v)f_+(v)^2\biggr)^2}\\
    &\geq \frac{h^2}{2(k-1)}\cdot \frac{ \Biggl(\sum_{i=1}^n \deg(v_i)f_+(v_i)^2\Biggr)^2}{\biggl(\sum_{v\in V}\deg(v)f_+(v)^2\biggr)^2}\\
    &=\frac{h^2}{2(k-1)}.
\end{align*}
\end{proof}

\textbf{Acknowledgments.} The author would like to thank the anonymous referees for valuable suggestions, and Jürgen Jost (Max Planck Institute for Mathematics in the Sciences) for every time he told her not to be pessimistic about a possible generalization of the Cheeger inequalities to the case of hypergraphs.

\bibliographystyle{unsrt}
\bibliography{Cheeger25.5}

\begin{thebibliography}{10}

\bibitem{Polya}
G.~Pólya and S.~Szegő.
\newblock {Isoperimetric inequalities in mathematical physics}.
\newblock {\em Annals of Math. Studies}, 27, 1951.

\bibitem{CheegerPhD}
J.~Cheeger.
\newblock {A lower bound for the smallest eigenvalue of the Laplacian}.
\newblock In {\em Problems in Analysis}, pages 195--199. Princeton University
  Press, 1970.

\bibitem{Buser1}
P.~Buser.
\newblock {On Cheeger's Inequality $\lambda_1\geq h^2/4$}.
\newblock In {\em Geometry of the Laplace Operator (Proceedings of Symposia in
  Pure Mathematics)}, volume~36, pages 29--77, 1980.

\bibitem{Buser2}
P.~Buser.
\newblock A note on the isoperimetric constant.
\newblock {\em Annales scientifiques de l'\'Ecole Normale Sup\'erieure}, Ser.
  4, 15(2):213--230, 1982.

\bibitem{dodziuk}
J.~Dodziuk.
\newblock Difference equations, isoperimetric inequality and transience of
  certain random walks.
\newblock {\em Transactions of the American Mathematical Society},
  284(2):787--794, 1984.

\bibitem{Alon}
N.~Alon and V.~Milman.
\newblock {$\lambda_1$, isoperimetric inequalities for graphs, and
  superconcentrators}.
\newblock {\em J. Combin. Theory Ser. B}, 38:73--88, 1985.

\bibitem{Chung}
F.~Chung.
\newblock Spectral graph theory.
\newblock {\em American Mathematical Society}, 1997.

\bibitem{np}
A.~Szlam and X.~Bresson.
\newblock {Total Variation and Cheeger Cuts}.
\newblock In {\em Proceedings of the 27th International Conference on
  International Conference on Machine Learning}, ICML'10, pages 1039--1046,
  Madison, WI, USA, 2010. Omnipress.

\bibitem{appl2007}
D.A. Spielman and S.-H. Teng.
\newblock {Spectral partitioning works: Planar graphs and finite element
  meshes}.
\newblock {\em Linear Algebra and its Applications}, 421:284--305, 2007.

\bibitem{appl2008}
U.~von Luxburg, M.~Belkin, and O.~Bousquet.
\newblock Consistency of spectral clustering.
\newblock {\em Ann. Statist.}, 36(2):555--586, 04 2008.

\bibitem{appl2012}
A.-C. Castro, B.~Pelletier, and P.~Pudlo.
\newblock The normalized graph cut and cheeger constant: from discrete to
  continuous.
\newblock {\em Adv. in Appl. Probab.}, 44(4):907--93, 2012.

\bibitem{appl2015}
K.C. Chang, S.~Shao, and D.~Zhang.
\newblock The 1-laplacian cheeger cut: Theory and algorithms.
\newblock {\em Journal of Computational Mathematics}, 33(5):443--467, 2015.

\bibitem{Clustering}
U.~von Luxburg.
\newblock A tutorial on spectral clustering.
\newblock {\em Stat. Comput.}, 17(4):395--416, 2007.

\bibitem{KlamtHausTheis}
S.~Klamt, U.U. Haus, and F.~Theis.
\newblock Hypergraphs and cellular networks.
\newblock {\em PLoS Comp. Biol.}, 5(5):e1000385, 2009.

\bibitem{ZhangLiu}
Z.K. Zhang and C.~Liu.
\newblock A hypergraph model of social tagging networks.
\newblock {\em J. Stat. Mech.}, 2010(10):P10005, 2010.

\bibitem{neuro1}
R.~Mulas and N.M. Tran.
\newblock Minimal embedding dimensions of connected neural codes.
\newblock {\em Algebraic Statistics}, 11(1):99--106, 2020.

\bibitem{LanchierNeufer}
N.~Lanchier and J.~Neufer.
\newblock Stochastic dynamics on hypergraphs and the spatial majority rule
  model.
\newblock {\em J. Stat. Phys.}, 151(1):21--45, 2013.

\bibitem{BodoKatonaSimon}
{\'A}.~Bod{\'o}, G.Y. Katona, and P.L. Simon.
\newblock {SIS} epidemic propagation on hypergraphs.
\newblock {\em Bull. Math. Biol.}, 78(4):713--735, 2016.

\bibitem{ReffRusnak}
N.~Reff and L.~Rusnak.
\newblock {An oriented hypergraphic approach to algebraic graph theory}.
\newblock {\em Linear Algebra and its Applications}, 437:2262--2270, 2012.

\bibitem{Hypergraphs}
J.~Jost and R.~Mulas.
\newblock {Hypergraph Laplace operators for chemical reaction networks}.
\newblock {\em Advances in Mathematics}, 351:870--896, 2019.

\bibitem{MulasZhang}
R.~Mulas and D.~Zhang.
\newblock {Spectral theory of Laplace Operators on oriented hypergraphs}.
\newblock {\em Discrete Math.}, 344(6):112372, 2021.

\bibitem{related2018}
P.~Li and O.~Milenkovic.
\newblock Submodular hypergraphs: p-laplacians, cheeger inequalities and
  spectral clustering.
\newblock In {\em Proceedings of the 35th International Conference on Machine
  Learning, PMLR 80:3014-3023}, 2018.

\bibitem{related2018_1}
T.H.H. Chan, A.~Louis, Z.G. Tang, and C.~Zhang.
\newblock {Spectral Properties of Hypergraph Laplacian and Approximation
  Algorithms}.
\newblock {\em J. ACM}, 65(3), March 2018.

\bibitem{related3}
M.~Ikeda, A.~Miyauchi, Y.~Takai, and Y.~Yoshida.
\newblock {Finding Cheeger Cuts in Hypergraphs via Heat Equation}.
\newblock arXiv:1809.04396.

\bibitem{related2020}
A.~Banerjee.
\newblock On the spectrum of hypergraphs.
\newblock {\em Linear Algebra and its Applications}, 2020.

\bibitem{Sharp}
R.~Mulas.
\newblock {Sharp bounds for the largest eigenvalue of the normalized hypergraph
  Laplace Operator}.
\newblock {\em Math. Notes}, 2021.
\newblock To appear.

\bibitem{AndreottiMulas}
E.~Andreotti and R.~Mulas.
\newblock {Spectra of Signless Normalized Laplace Operators for Hypergraphs}.
\newblock arXiv:2005.14484.

\end{thebibliography}
				\end{document}